\newtheorem{thm}{Theorem}[section]
\newtheorem{lem}[thm]{Lemma}
\theoremstyle{definition}
\newcommand{\scr}[1]{\mathscr #1}
\definecolor{wco}{rgb}{0.5,0.2,0.3}
\numberwithin{equation}{section} \theoremstyle{remark}
\newtheorem{rem}{Remark}[section]
\newcommand{\ua}{\uparrow}
\title{{\bf Long-Term Behaviors of Stochastic Interest Rate Models with Jumps and Memory}
}
\author{
{\bf  Jianhai Bao \, and \,  Chenggui Yuan}\\
 \footnotesize{Department of Mathematics,
Swansea University, Singleton Park, SA2 8PP, UK}\\
\footnotesize{majb@Swansea.ac.uk, C.Yuan@Swansea.ac.uk}}
\begin{document}
\def\R{\mathbb R}  \def\ff{\frac} \def\ss{\sqrt} \def\B{\mathbf
B}
\def\N{\mathbb N} \def\kk{\kappa} \def\m{{\bf m}}
\def\dd{\delta} \def\DD{\Delta} \def\vv{\varepsilon} \def\rr{\rho}
\def\<{\langle} \def\>{\rangle} \def\GG{\Gamma} \def\gg{\gamma}
  \def\nn{\nabla} \def\pp{\partial} \def\EE{\scr E}
\def\d{\text{\rm{d}}} \def\bb{\beta} \def\aa{\alpha} \def\D{\scr D}
  \def\si{\sigma} \def\ess{\text{\rm{ess}}}
\def\beg{\begin} \def\beq{\begin{equation}}  \def\F{\scr F}
\def\Ric{\text{\rm{Ric}}} \def\Hess{\text{\rm{Hess}}}
\def\e{\text{\rm{e}}} \def\ua{\underline a} \def\OO{\Omega}  \def\oo{\omega}
 \def\tt{\tilde} \def\Ric{\text{\rm{Ric}}}
\def\cut{\text{\rm{cut}}} \def\P{\mathbb P} \def\ifn{I_n(f^{\bigotimes n})}
\def\C{\scr C}      \def\aaa{\mathbf{r}}     \def\r{r}
\def\gap{\text{\rm{gap}}} \def\prr{\pi_{{\bf m},\varrho}}  \def\r{\mathbf r}
\def\Z{\mathbb Z} \def\vrr{\varrho} \def\ll{\lambda}
\def\L{\scr L}\def\Tt{\tt} \def\TT{\tt}\def\II{\mathbb I}
\def\i{{\rm in}}\def\Sect{{\rm Sect}}\def\E{\mathbb E} \def\H{\mathbb H}
\def\M{\scr M}\def\Q{\mathbb Q} \def\texto{\text{o}} \def\LL{\Lambda}
\def\Rank{{\rm Rank}} \def\B{\scr B} \def\i{{\rm i}} \def\HR{\hat{\R}^d}
\def\to{\rightarrow}\def\l{\ell}
\def\8{\infty}

\date{}

\maketitle

\begin{abstract}
In this paper we show the convergence of the long-term
return $t^{-\mu}\int_0^tX(s)\d s$ for some $\mu\geq1$, where $X$ is
the short-term interest rate which follows an extension of
Cox-Ingersoll-Ross type model with jumps and memory, and, as an
application, we also investigate the corresponding behavior of
two-factor Cox-Ingersoll-Ross model with jumps and memory.

\medskip \noindent
\noindent
{\small\bf  AMS subject Classification:}  60H10, 60H30    \\
\noindent
{\small\bf Key words: }  Cox-Ingersoll-Ross model; long-term return;  two-factor model.
 \end{abstract}

\section{Introduction}
Cox, Ingersoll and Ross \cite{cir85} propose  the short-term rate
dynamics as
\begin{equation*}
\d S(t)=\kappa(\gamma-S(t))\d t+\sigma\sqrt{S(t)}\d W(t),
\end{equation*}
where $\kappa,\gamma$ and $\sigma$ are positive constants. This
model is also named mean-reverting square root process or
Cox-Ingersoll-Ross (CIR) model. In order to better capture the
properties of the empirical data, there are many extensions of the
CIR model, e.g., Chan, Karolyi, Longstaff and Sander \cite{chan92}
generalize  the CIR model as
\begin{equation*}
\d S(t)=\kappa(\gamma-S(t))\d t+\sigma S(t)^\theta\d W(t),
\end{equation*}
where $\theta\ge 1/2.$  Another generalization of the CIR model is
to use the {\it regime-switching} such as in Ang and Bekaert
\cite{ang02} and Gary \cite{gray96}, to name a few.  On the other
hand, taking into consideration the influence of past events, many
scholars introduce  {\it delay }
 to the financial models. For example,  in his paper \cite{ben04}, Benhabin considers a linear, flexible price
model, where  nominal interest rates are measured by a flexible
distributed delay. In their paper \cite{ahp07}, Arriojas, Hu and
Mohammed take the delay into the consideration for  the price
process of underlying assets and develop the Black-Scholes formula.
Moreover, {\it jump processes} are also used in the financial
models, e.g.,  \cite{Bar, 4,   Hen, Merc}, and the references
therein.

There are extensive literature on quantitative and qualitative
properties of the generalized CIR-type models.   Different
convergence results and corresponding applications of the long-term
return can be found in \cite{dd95,dd98,z09};  Strong convergence of
the Monte Carlo simulations are studied in
\cite{dd98b,gr11,wmc08,wmc09}, and the representations  of solutions
are presented in \cite{ahp07,c92}. We here would like to point out
that Deelstra and Delbaen \cite{dd95,dd98} investigate the
long-term returns of the CIR model. Zhao \cite{z09} extends the results of  \cite{dd95,dd98} to the jump model. In the present  paper we will consider the effect of the past and jump in the determination of the interest model and study the long-term return of the
 stochastic interest rate model with jumps and memory.

In the following section, we will introduce the mathematical model
and notation, the long-term return will be studied in section 3, and
an application of the main result, Theorem \ref{Theorem 3.1},  is
discussed in the last section.

\section{Preliminaries}
Throughout this paper, let $(\Omega,
  {\cal{F}},\{{\cal{F}}_t\}_{t\ge 0}, \mathbb{P})$ be a complete probability space with a filtration
  $\{{\cal{F}}_t\}_{t\ge 0}$ satisfying
 the usual conditions (i.e. it is right continuous and ${\cal{F}}_{0}$ contains all
 $\mathbb{P}$-null sets). Let $W(t)$ be a scalar Brownian motion. Let  $\mathcal
{B}(\mathbb{R}_+)$ be the Borel $\sigma$-algebra on $\mathbb{R}_+$,
and $\lambda(dx)$ a $\sigma$-finite measure defined on $\mathcal
{B}(\mathbb{R}_+)$. Let $p=(p(t)),t\in D_p$, be a stationary
$\mathcal {F}_t$-Poisson point process on $\mathbb{R}_+$ with
characteristic measure $\lambda(\cdot)$. Denote by $N(dt,du)$ the
Poisson counting measure associated with $p$, i.e.,
$N(t,U)=\sum_{s\in D_p, s\leq t}I_{U}(p(s))$ for $U\in\mathcal
{B}(\mathbb{R}_+)$. We assume $\lambda(U)<\infty$ and let
$\tilde{N}(dt,du):=N(dt,du)-dt\lambda(du)$ be the compensated
Poisson measure associated with $N(dt,du)$.  For
the sake of convenience,  we will denote  $C>0$  a generic constant
whose values may change from lines to lines.

Consider stochastic interest rate
model with jumps and memory,
\begin{equation}\label{eq1}
\begin{cases}
\d X(t)=\{2\beta X(t) +\delta(t)\}\d t+\sigma
X^{\gamma}(t-\tau)\sqrt{|X(t)|}\d W(t)\\
\ \ \ \ \ \ \ \ \ \ \ +\int_Ug(X(t-),u)\tilde{N}(\d t,\d u),\\
 X_0=\xi\in\mathscr{C},
\end{cases}
\end{equation}
where $X(t-)=\lim_{s\uparrow t}X(s).$
We make the following assumptions:
\begin{enumerate}
\item[\textmd{(A1)}] $\beta<0$, $\sigma>0$ and
$\gamma\in[0,\frac{1}{2}).$
\item[\textmd{(A2)}]
$\delta:\Omega\times\mathbb{R}_+\rightarrow\mathbb{R}_+$, and there
exist constants $\mu\geq1$ and $\nu\geq0$ such that
\begin{equation*}
\lim_{t\rightarrow\infty}\frac{1}{t^\mu}\int_0^t\delta(s)\d s:=\nu \
\ \ \mbox{ a.s. }
\end{equation*}
\item[\textmd{(A3)}] $g:\Omega\times\mathbb{R}\rightarrow\mathbb{R}$ with $g(0,u)=0$ and there exists $K>0$ such that
\begin{equation*}
\int_U|g(x,u)-g(y,u)|^2\lambda(\d u)\leq K|x-y|^2
\end{equation*}
for arbitrary $x,y\in\mathbb{R}$.
\item[\textmd{(A4)}] For any $\theta\in[0,1]$, $x+\theta
g(x,u)\geq0$ whenever $x>0$.
\end{enumerate}

 Compared with the existing
literature, our key contributions of this paper are as follows:
\begin{itemize}
\item We investigate the almost sure convergence of the
long-term return $t^{-\mu}\int_0^tX(s)\d s$ for some $\mu\geq1$, and
extend the results of Deelstra and Delbaen \cite{dd95,dd98} and Zhao
\cite{z09}.  Since the  jumps and memory are involved, we will see
the generalization is not trivial.
\item As an application, we also study the long-term behaviors for a
class of two-factor CIR models with jumps and memory, where we extend the result of  \cite[Theorem 2]{z09}.
\end{itemize}

\section{Almost Sure Convergence of Long-Term Returns}

For our purposes we first prepare or recall several auxiliary
lemmas.
\begin{lem}\label{lemma 1.1}
{\rm Under $(A1)$-$(A4)$, Eq. \eqref{eq1} admits a unique
nonnegative solution $(X(t))_{t\geq0}$ for any $\xi\in\mathscr{C}$.
}
\end{lem}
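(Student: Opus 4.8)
The plan is to combine the \emph{method of steps} with a Yamada--Watanabe type pathwise-uniqueness argument adapted to the jump setting, while propagating nonnegativity inductively from one delay interval to the next. Since the memory enters \eqref{eq1} only through the coefficient $X^\gamma(t-\tau)$ of the diffusion, I would first solve the equation on $[0,\tau]$, where $X^\gamma(t-\tau)=\xi^\gamma(t-\tau)$ is a known $\mathcal F_t$-adapted (and, for $\xi\in\mathscr{C}$, bounded) process fixed by the initial datum. On this interval \eqref{eq1} becomes a delay-free scalar SDE with jumps whose diffusion coefficient is $b(t,x)=\sigma\,\xi^\gamma(t-\tau)\ss{|x|}$. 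Having solved and shown $X\ge 0$ on $[0,\tau]$, the coefficient $X^\gamma(t-\tau)$ is then well defined and nonnegative on $[\tau,2\tau]$, so the same argument applies there, and iterating over $[n\tau,(n+1)\tau]$ produces a unique nonnegative solution on all of $\R_+$. Thus it suffices to treat a generic delay-free interval, and nonnegativity must be established on each interval \emph{before} passing to the next, so that the fractional power $X^\gamma(t-\tau)$ remains meaningful.

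For pathwise uniqueness on a generic interval, the chief obstacle is that $\ss{|x|}$ is only H\"older-$\ff12$ continuous, so the classical global-Lipschitz theory fails. I would invoke the Yamada--Watanabe device: take smooth functions $\varphi_n$ approximating $|x|$ with $0\le\varphi_n''(x)\le 2/(n|x|)$, using that the modulus $\rho(u)=\ss u$ satisfies $\int_{0+}\rho^{-2}(u)\,\d u=\int_{0+}u^{-1}\,\d u=\8$. Applying It\^o's formula for jump processes to $\varphi_n(X_1(t)-X_2(t))$ for two solutions with common initial data and letting $n\to\8$ annihilates the diffusion contribution; the drift term $2\beta x$ is Lipschitz, and the jump term is controlled by (A3), which furnishes $\int_U|g(x,u)-g(y,u)|^2\lambda(\d u)\le K|x-y|^2$. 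A Gronwall estimate then forces $X_1=X_2$.

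For existence I would regularize the square-root diffusion by globally Lipschitz approximants (for instance a Lipschitz modification agreeing with $\ss{|x|}$ off a shrinking neighborhood of the origin) and solve the resulting Lipschitz equations by the standard contraction scheme. Uniform a priori bounds of the form $\E\sup_{s\le t}|X^{(n)}(s)|^2\le C$, obtained via It\^o's formula, the Burkholder--Davis--Gundy inequality and Gronwall's lemma, then yield tightness of the approximating laws; extracting a convergent subsequence and identifying the limit gives a weak solution of \eqref{eq1} on the interval. Combined with the pathwise uniqueness above, the Yamada--Watanabe theorem upgrades this to a unique strong solution.

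Finally, nonnegativity is where (A2) and (A4) are essential. At the boundary $x=0$ the drift reduces to $\delta(t)\ge0$ while the diffusion $\sigma X^\gamma(t-\tau)\ss{|x|}$ vanishes, so the continuous dynamics push the process upward, and (A4) with $\theta=1$ gives $x+g(x,u)\ge0$, ensuring a jump out of a positive state lands in $[0,\8)$ (with $g(0,u)=0$ fixing the origin). To prove this rigorously I would apply It\^o's formula to a smooth approximation $\phi$ of $(x^-)^2$ and show $\E[(X(t)^-)^2]=0$, the full range $\theta\in[0,1]$ in (A4) being used to sign the jump contribution $\int_U\{\phi(X(t-)+g)-\phi(X(t-))-g\,\phi'(X(t-))\}\lambda(\d u)$ through a Taylor expansion at the intermediate point $X(t-)+\theta g$. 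The hard part throughout is the \emph{simultaneous} presence of the degenerate, non-Lipschitz square-root diffusion and the jump integral: neither a direct Lipschitz fixed-point argument nor the standard continuous-CIR nonnegativity proof applies, so the Yamada--Watanabe approximation functions have to be carried through uniqueness, construction, and positivity at once, while the step-by-step induction keeps the fractional delay coefficient meaningful.
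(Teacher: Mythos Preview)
Your overall architecture matches the paper's: both use the method of steps to reduce to a delay-free equation on successive intervals, and both rely on Yamada--Watanabe approximating functions. The paper is terser on existence and uniqueness, simply invoking \cite[Theorems~2.1 and 2.2]{z10} on each $[n\tau,(n+1)\tau]$ and then quoting a uniform moment bound $\E|X(t)|^q\le C$ on $[0,T]$ from \cite{wmc09}; your self-contained sketch via H\"older-modulus pathwise uniqueness plus regularisation/tightness is a legitimate and more explicit alternative. The paper also establishes nonnegativity \emph{after} global existence rather than interval-by-interval, but your inductive ordering is sound and makes the meaning of $X^\gamma(t-\tau)$ transparent.

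There is, however, a concrete gap in your nonnegativity step. Applying It\^o's formula to a smooth approximation of $(x^-)^2$ does \emph{not} close: the second derivative of $(x^-)^2$ equals $2$ on $\{x<0\}$, so the diffusion contribution is
\[
\tfrac12\cdot 2\cdot \sigma^2 X^{2\gamma}(s-\tau)\,|X(s)|\,{\bf 1}_{\{X(s)<0\}}
=\sigma^2 X^{2\gamma}(s-\tau)\,X(s)^-,
\]
which is nonnegative and, after Cauchy--Schwarz, yields only $u(t)\le C\int_0^t\sqrt{u(s)}\,\d s+K\int_0^t u(s)\,\d s$ for $u(t)=\E[(X(t)^-)^2]$. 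Since $\int_{0+}v^{-1/2}\,\d v<\infty$, Osgood's criterion fails and one cannot conclude $u\equiv0$. The paper instead builds $\phi_k\uparrow x^-$ with $0\le\phi_k''(x)\le 2/(k|x|)$---precisely the functions you already introduced for uniqueness---so that the diffusion term is bounded by $\frac{\sigma^2}{k}X^{2\gamma}(s-\tau)$ and vanishes as $k\to\infty$ thanks to the moment bound; the jump term on $\{X(s)\le0\}$ is controlled via (A3), $|\phi_k'|\le1$ and $x^-\le\phi_k(x)+a_{k-1}$, and Gronwall then gives $\E X(t)^-=0$. Replacing $(x^-)^2$ by this $\phi_k$ repairs your argument with no other changes needed.
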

\begin{proof}
Application of \cite[Theorem 2.1 \& 2.2]{z10} gives that Eq.
\eqref{eq1} has a unique strong solution $X(t)$ on $[0,\tau]$.
Repeating this procedure we see that Eq. \eqref{eq1} also admits a
unique strong solution $X(t)$ on $[\tau,2\tau]$. Hence Eq.
\eqref{eq1} has a unique strong solution $X(t)$ on the horizon $t
\ge 0$. Moreover, carrying out a similar argument to that of
\cite[Theorem 2.1]{wmc09}, we can deduce that there exists $C>0$
such that for any $q>0$
\begin{equation}\label{eq27}
\mathbb{E}|X(t)|^q\leq C, \ \ t\in[0,T].
\end{equation}
To end the proof, it is sufficient to show the
nonnegative property of the solution $(X(t))_{t\in[0,T]}$ for any $T>0$. We
adopt the method of Yamada and Watanabe \cite{yw71}. Let
$a_0=1$ and $a_k=\exp(-k(k+1)/2),k=1,2\cdots$. Then it is easy to
see that $ \int_{a_k}^{a_{k-1}}\frac{1}{kx}\d x=1 $ and consequently
there is a continuous nonnegative function
$\psi_k(x),x\in\mathbb{R}_+$, which possesses the support
$(a_k,a_{k-1})$, has integral $1$ and satisfies
$\psi_k(x)\leq\frac{2}{kx}$. Define an auxiliary function
$\phi_k(x)=0$ for $x\geq0$ and
$$\phi_k(x):=\int_0^{-x}\d y\int_0^y\psi_k(u)\d u, \ \ x<0.$$ By a straightforward
computation, $\phi_k\in C^2(\mathbb{R};\mathbb{R}_+)$ has the
following properties:
\begin{enumerate}
\item[\textmd{(i)}] $-1\leq\phi_k'(x)\leq0$ for $-a_{k-1}<x<-a_k$, or
otherwise $\phi_k'(x)=0$;
\item[\textmd{(ii)}] $|\phi_k''(x)|\leq\frac{2}{k|x|}$ for $-a_{k-1}<x<-a_k$, or
otherwise $\phi_k''(x)=0$;
\item[\textmd{(iii)}] $x^--a_{k-1}\leq\phi_k(x)\leq x^-,
\ \ x\in\mathbb{R}$.
\end{enumerate}
An application of the It\^o formula yields that for any $t\in(0,T]$
\begin{equation*}
\begin{split}
&\mathbb{E}\phi_k(X(t))=\mathbb{E}\phi_k(\xi(0))+\mathbb{E}\int_0^t\phi'_k(X(s))\{2\beta
X(s) +\delta(s)\}\d s\\
&\quad+\frac{\sigma^2}{2}\mathbb{E}\int_0^t\phi''_k(X(s))X^{2\gamma}(s-\tau)X(s)\d
s\\
&\quad+\mathbb{E}\int_0^t\int_U\{\phi_k(X(s)+g(X(s),u))-\phi_k(X(s))-\phi'_k(X(s))g(X(s),u)\}\lambda(\d
u)\d s.
\end{split}
\end{equation*}
By the properties (i)-(iii), Taylor's expansion and $(A4)$, it then
follows from \eqref{eq27} that
\begin{equation*}
\begin{split}
&\mathbb{E}\phi_k(X(t))
\leq\frac{C}{k}+\mathbb{E}\int_0^t\int_U\int_0^1\{(\phi'_k(\theta
g(X(s),u)+X(s))-\phi'_k(X(s)))g(X(s),u)\}\d \theta \lambda(\d u)\d
s\\
&=\frac{C}{k}+\mathbb{E}\int_0^t\int_U\int_0^1\{(\phi'_k(\theta
g(X(s),u)+X(s))-\phi'_k(X(s)))g(X(s),u)\}{\bf 1}_{\{X(s)>0\}}\d
\theta \lambda(\d u)\d
s\\
&\quad+\mathbb{E}\int_0^t\int_U\int_0^1\{(\phi'_k(\theta
g(X(s),u)+X(s))-\phi'_k(X(s)))g(X(s),u)\}{\bf 1}_{\{X(s)\leq0\}}\d
\theta \lambda(\d u)\d s\\
&\leq\frac{C}{k}+2K^{\frac{1}{2}}\lambda^{\frac{1}{2}}(U)\mathbb{E}\int_0^tX^-(s){\bf
1}_{\{X(s)\leq0\}}\d s\\
&\leq\frac{C}{k}+2K^{\frac{1}{2}}\lambda^{\frac{1}{2}}(U)\mathbb{E}\int_0^t\{a_{k-1}+\phi_k(X(s))\}\d
s\\
&=\frac{C}{k}+2K^{\frac{1}{2}}\lambda^{\frac{1}{2}}(U)Ta_{k-1}+2K^{\frac{1}{2}}
\lambda^{\frac{1}{2}}(U)\int_0^t\mathbb{E}\phi_k(X(s))\d s, \ \ \ \
\ t\in(0,T].
\end{split}
\end{equation*}
This, together with the Gronwall inequality, gives that
\begin{equation*}
\mathbb{E}X^-(t)-a_{k-1}\leq\mathbb{E}\phi_k(X(t))\leq
C\Big(\frac{1}{k}+a_{k-1}\Big), \ \ \ \ t\in(0,T].
\end{equation*}
Thus, $\mathbb{E}X^-(t)=0$ as $k\rightarrow\infty$ and therefore
$X(t)\geq0$ a.s. for any $t\in(0,T]$. Hence the nonnegative property
of the solution $(X(t))_{t\geq0}$ follows from the arbitrariness of
$T>0$.
\end{proof}

\begin{rem}
{\rm There are some examples such that $(A4)$ holds, e.g., for
$x\in\mathbb{R}$ and $u\in U$, $g(x,u)\geq0$ or $-g(x,u)\leq x$
whenever $g(x,u)\leq0$. }
\end{rem}

\begin{rem}
{\rm Wu, Mao and Chen \cite{wmc08} study the strong convergence of
Monte Carlo simulations of the mean-reverting square root process
with jump
\begin{equation}\label{eq32}
\d S(t)=\alpha[\mu- S(t)]\d t+ \sigma\sqrt{|S(t)|}\d W(t)+\delta
S(t-)\d \tilde{N}(t),
\end{equation}
where $\alpha,\mu,\sigma>0$, and, in particular, investigate the
nonnegative property of $S(t)$. It is easy to see that our model is
a generalization of  model \eqref{eq32}. Zhao \cite{z09} also showed
the nonnegative property of \eqref{eq1} with $\gamma=0$, $g(x,u)=0$
for $x<0$ and $\int_Ug^2(x,u)\lambda(\d u)\leq K|x|$ for some
constant $K>0$. Moreover, we would like to point that our goal is to
study the long-term return, which is different from those of
\cite{wmc08,wmc09}.}
\end{rem}

\begin{lem}\label{Lemma 2.2}
{\rm Let $(A1)$-$(A4)$ hold and assume further that $4\beta+K<0$.
Then there exist $\kappa>0$ and $C>0$ such that
\begin{equation}\label{eq7}
\begin{split}
\mathbb{E}(e^{-\kappa\beta \rho}X^2(\rho)) &\leq C+C\mathbb{E}\int_0^\rho
e^{-\kappa\beta s}(\delta^2(s)+1)\d s,
\end{split}
\end{equation}
where $\rho>0$ is a bounded stopping time.}
\end{lem}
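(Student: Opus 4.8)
The plan is to apply It\^o's formula to the functional $e^{-\kappa\beta t}X^2(t)$ for a suitably small $\kappa>0$ and then exploit the negativity of the resulting drift coefficient. Since $\beta<0$, the standing hypothesis $4\beta+K<0$ is equivalent to $4+K/\beta>0$, so I can fix
\[
\kappa\in\Big(0,\,4+\tfrac{K}{\beta}\Big),\qquad\text{which forces}\qquad -\kappa\beta+4\beta+K<0.
\]
Writing $f(t,x)=e^{-\kappa\beta t}x^2$ and applying the jump--diffusion It\^o formula to \eqref{eq1}, the contributions of $\partial_tf$, of $\partial_xf\{2\beta X+\delta\}$, and of $\tfrac12\partial_{xx}f\,\sigma^2X^{2\gamma}(\cdot-\tau)X$ combine (using $X\ge0$ from Lemma \ref{lemma 1.1}) into the drift integrand
\[
e^{-\kappa\beta s}\big\{(-\kappa\beta+4\beta)X^2(s)+2X(s)\delta(s)+\sigma^2X^{2\gamma}(s-\tau)X(s)\big\},
\]
while the jump part splits into a compensated integral against $\tilde N$ and a compensator integral whose integrand is $(X+g)^2-X^2-2Xg=g^2$. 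By $(A3)$ with $y=0$ and $g(0,u)=0$ one has $\int_Ug^2(x,u)\lambda(\d u)\le Kx^2$, so the compensator contributes at most $e^{-\kappa\beta s}KX^2(s)$. This is exactly where the coefficient $-\kappa\beta+4\beta+K$, and hence the assumption $4\beta+K<0$, enters.

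Next I would absorb the cross terms via Young's inequality: $2X(s)\delta(s)\le\varepsilon_1X^2(s)+\varepsilon_1^{-1}\delta^2(s)$ and $\sigma^2X^{2\gamma}(s-\tau)X(s)\le\varepsilon_2X^2(s)+C_{\varepsilon_2}X^{4\gamma}(s-\tau)$. Choosing $\varepsilon_1,\varepsilon_2$ small enough that $-\kappa\beta+4\beta+K+\varepsilon_1+\varepsilon_2\le0$, the full $X^2(s)$ drift term acquires a nonpositive coefficient and may be discarded in the upper bound. What remains on the right-hand side is the initial value $X^2(0)$, the two martingale integrals, and the integral of $e^{-\kappa\beta s}\{\varepsilon_1^{-1}\delta^2(s)+C_{\varepsilon_2}X^{4\gamma}(s-\tau)\}$.

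To evaluate at the stopping time $\rho$, I would localize by $\rho_n=\rho\wedge\inf\{t:X(t)\ge n\}$ so that both stochastic integrals are genuine martingales with zero mean at $\rho_n$; the moment bound \eqref{eq27} then permits passage to the limit $n\to\infty$ via Fatou's lemma on the left and dominated convergence on the right. Finally, since $\rho$ is bounded, say $\rho\le T$, and $4\gamma<2$, the delayed factor is controlled purely by moments: using $X^{4\gamma}\le1+X^2$ together with $\mathbb{E}X^2(s-\tau)\le C$ (from \eqref{eq27} when $s\ge\tau$ and from the boundedness of $\xi\in\mathscr{C}$ when $s<\tau$) gives
\[
\mathbb{E}\int_0^\rho e^{-\kappa\beta s}X^{4\gamma}(s-\tau)\d s\le C+C\,\mathbb{E}\int_0^\rho e^{-\kappa\beta s}\d s,
\]
while $\mathbb{E}X^2(0)\le C$. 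Collecting these estimates yields \eqref{eq7}. The main obstacle is the memory term $\sigma^2X^{2\gamma}(s-\tau)X(s)$: it must be split so that the current-time factor is absorbed into the strictly negative drift while the delayed factor, being of subquadratic order, is tamed by the uniform moment estimate \eqref{eq27}; tracking this splitting together with the localization needed to make the jump martingale integrate to zero is the only delicate point.
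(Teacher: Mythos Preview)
Your derivation of the drift via It\^o's formula, the treatment of the jump compensator through $(A3)$, and the Young-inequality splitting of the two cross terms all mirror the paper's proof. The difference lies solely in how the delayed factor $X^{4\gamma}(s-\tau)$ is controlled, and there the argument has a genuine gap.

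The point of the lemma is that the constants $\kappa,C$ in \eqref{eq7} are \emph{uniform} over all bounded stopping times $\rho$; this uniformity is exactly what is exploited in the proof of Theorem~\ref{Theorem 1.3}, where \eqref{eq7} is applied with $\rho=s\wedge\tau_n$ for $s$ ranging over $[0,\infty)$ and the resulting constant is then integrated against $e^{\kappa\beta s}(1+s)^{-2\mu}$. Your proposal bounds $\mathbb{E}X^{4\gamma}(s-\tau)$ via the moment estimate \eqref{eq27}, but that estimate is stated only on a finite horizon $[0,T]$ with a constant depending on $T$. Feeding a $T$-dependent bound through your last display produces a version of \eqref{eq7} in which $C=C(T)$ grows with the bound on $\rho$; this is insufficient for the later application and, in fact, somewhat circular, since a time-uniform second-moment control is essentially what \eqref{eq7} is meant to deliver.

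The paper avoids any appeal to \eqref{eq27}. After the first Young inequality it applies a second one, $C_{\varepsilon_2}X^{4\gamma}(s-\tau)\le \epsilon\, e^{\kappa\beta\tau}X^2(s-\tau)+C_2(\epsilon)$, and then uses the time shift
\[
\int_0^\rho e^{-\kappa\beta s}\,e^{\kappa\beta\tau}X^2(s-\tau)\,\d s
=\int_{-\tau}^{\rho-\tau}e^{-\kappa\beta r}X^2(r)\,\d r
\le C\|\xi\|^2+\int_0^{\rho}e^{-\kappa\beta r}X^2(r)\,\d r,
\]
so that the delayed quadratic term is absorbed back into the current-time quadratic integral. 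This is why the coefficient in front of $\int_0^\rho e^{-\kappa\beta s}X^2(s)\,\d s$ becomes $(4-\kappa)\beta+2\epsilon+K$ (one $\epsilon$ from each Young split), and why setting it equal to zero kills the entire $X^2$ integral while leaving a constant depending only on $\|\xi\|$ and the model parameters. Replacing your appeal to \eqref{eq27} by this time-shift absorption closes the gap.
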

\begin{proof}
 We first recall the Young inequality: for any $a,b>0$ and
$\alpha\in(0,1)$
\begin{equation}\label{eq14}
a^\alpha b^{1-\alpha}\leq \alpha a+(1-\alpha)b.
\end{equation}
Let $\kappa>0$ and $\epsilon>0$ be arbitrary. By the It\^o formula,
$(A3)$ and the Young inequality \eqref{eq14}, we obtain that
\begin{equation*}
\begin{split}
\d (e^{-\kappa\beta t}X^2(t))&=-\kappa\beta e^{-\kappa\beta t}X^2(t)\d t+e^{-\kappa\beta
t}\d X^2(t)\\
&=e^{-\kappa\beta t}\Big\{(4-\kappa)\beta X^2(t)+\sigma^2X(t)X^{2\gamma}(t-\tau)+2\delta(t)X(t)\\
&\quad+\int_Ug^2(X(t),u)\lambda(\d u)\Big\}\d t+M_1(t)+M_2(t)\\
&\leq e^{-\kappa\beta t}\{((4-\kappa)\beta+\epsilon+K)  X^2(t)+C_1(\epsilon)X^{4\gamma}(t-\tau)+C_1(\epsilon)\delta^2(t)\}\d t\\
&\quad+M_1(t)+M_2(t)\\
 &\leq e^{-\kappa\beta t}\{((4-\kappa)\beta+\epsilon+K)  X^2(t)+\epsilon e^{\kappa\beta\tau} X^2(t-\tau)+C_1(\epsilon)\delta^2(t)+C_2(\epsilon)\}\d t\\
&\quad+M_1(t)+M_2(t)
\end{split}
\end{equation*}
for some  constants $C_1(\epsilon)>0$ and $C_2(\epsilon)>0$,
dependent on $\epsilon$, where $ M_1(t):=2\sigma e^{-\kappa\beta t}
X^{\frac{3}{2}}(t)X^{\gamma}(t-\tau)\d W(t)$ and $
M_2(t):=e^{-\kappa\beta
t}\int_U\{g^2(X(t),u)+2X(t)g(X(t),u)\}\tilde{N}(\d t,\d u). $
Integrating from $0$ to $\rho$ and taking expectations  on both
sides, we arrive at
\begin{equation*}
\begin{split}
\mathbb{E}(e^{-\kappa\beta \rho}X^2(\rho))&\leq
C\|\xi\|^2+((4-\kappa)\beta+2\epsilon+K)\mathbb{E}\int_0^\rho e^{-\kappa\beta
s}X^2(s)\d
s\\
&\quad+(C_1(\epsilon)\vee
C_2(\epsilon))\mathbb{E}\int_0^\rho(\delta^2(s)+1)\ ds.
\end{split}
\end{equation*}
Due to $4\beta+K<0$, we can choose $\kappa>0$ and $\epsilon>0$ such that
$(4-\kappa)\beta+2\epsilon+K=0$, and therefore \eqref{eq7} follows
immediately.
\end{proof}

For the future use, we cite the following as a lemma.
\begin{lem}\label{Lemma 1.2}
{\rm (\cite[Kronecker's lemma, p164]{dd95}) Assume that $Y(t)$ is a
c\`{a}dl\`{a}g semimartingale and that $f(t)$ is a strictly positive
increasing function with $f(t)\rightarrow\infty$ as
$t\rightarrow\infty$. If $\int_0^\infty\frac{\d Y(t)}{f(t)}$ exists
a.s., then $\frac{Y(t)}{f(t)}\rightarrow0$ a.s. }
\end{lem}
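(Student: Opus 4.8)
The plan is to reduce this to the classical Kronecker lemma via an integration-by-parts argument carried out pathwise. I would set $Z(t):=\int_0^t\frac{\d Y(s)}{f(s)}$, so that by hypothesis there is an a.s. event on which $Z(t)$ converges to a finite limit $Z(\8)$ as $t\to\8$. From here I would fix a sample point $\oo$ in this event and argue deterministically: this is legitimate because $f$ is a deterministic, strictly positive, increasing function and hence of locally finite variation, so every integral below is an ordinary Lebesgue--Stieltjes integral along the fixed path, and the only role of the semimartingale hypothesis on $Y$ is to guarantee that $\int_0^{\cdot}f(s)^{-1}\d Y(s)$ is well defined.

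The first step is to recover $Y$ from $Z$. Since $\d Z(s)=f(s)^{-1}\d Y(s)$, we have $\d Y(s)=f(s)\,\d Z(s)$ and therefore $Y(t)-Y(0)=\int_0^t f(s)\,\d Z(s)$. Applying the integration-by-parts (product) rule to $f(t)Z(t)$, and noting that $f$ is of finite variation (so that, for $f$ continuous, no cross-variation term appears), gives
\begin{equation*}
\int_0^t f(s)\,\d Z(s)=f(t)Z(t)-f(0)Z(0)-\int_0^t Z(s)\,\d f(s).
\end{equation*}
Since $Z(0)=0$, this yields the key identity
\begin{equation*}
\frac{Y(t)}{f(t)}=\frac{Y(0)}{f(t)}+Z(t)-\frac{1}{f(t)}\int_0^t Z(s)\,\d f(s).
\end{equation*}

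I would then let $t\to\8$ and handle the three terms separately. Because $f(t)\to\8$, the first term tends to $0$ and the second satisfies $Z(t)\to Z(\8)$. The crux is the last term, a Cesàro-type average of $Z$ against the increasing weight $\d f$, for which I claim $\frac1{f(t)}\int_0^t Z(s)\,\d f(s)\to Z(\8)$. To prove the claim, fix $\vv>0$, pick $S$ with $|Z(s)-Z(\8)|<\vv$ for $s\ge S$, and split the integral at $S$: the contribution over $[0,S]$ is at most $f(t)^{-1}\int_0^S|Z(s)-Z(\8)|\,\d f(s)$, which vanishes since the numerator is a finite constant and $f(t)\to\8$, while the contribution over $[S,t]$ is at most $\vv\,(f(t)-f(S))/f(t)\le\vv$; combined with $f(t)^{-1}\int_0^t Z(\8)\,\d f(s)=Z(\8)(f(t)-f(0))/f(t)\to Z(\8)$, this gives the claim. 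Substituting back, $Y(t)/f(t)\to 0+Z(\8)-Z(\8)=0$, and since the underlying event has full probability, $Y(t)/f(t)\to 0$ a.s.

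The main obstacle is bookkeeping rather than depth: making the integration-by-parts step fully rigorous when $f$ is merely increasing (hence possibly with jumps) requires either observing that $f$ may be taken continuous or carrying the deterministic jump contribution $\sum_{s\le t}\Delta f(s)\,\Delta Z(s)$ through the Lebesgue--Stieltjes product rule and verifying it is absorbed by the same averaging estimate. Because $f$ is deterministic and the entire computation is pathwise, no genuinely stochastic difficulty intervenes, so the semimartingale structure is used only to define $Z$.
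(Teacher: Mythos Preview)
The paper does not actually prove this lemma: it is stated as a citation of a known result from \cite{dd95} and used as a black box in the subsequent arguments. There is therefore no proof in the paper to compare your attempt against.

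Your argument is the standard proof of the continuous-time Kronecker lemma and is essentially correct. Two small points worth tightening: (i) in the integration-by-parts identity the integrand against $\d f$ should in general be $Z(s-)$ rather than $Z(s)$; this is harmless for the averaging step, since $Z(s-)\to Z(\infty)$ whenever $Z(s)\to Z(\infty)$, so the same $\varepsilon$-splitting argument goes through verbatim. (ii) In all the applications in this paper the functions $f$ are smooth (powers of $1+t$, possibly multiplied by exponentials), so the jump bookkeeping you flag never actually arises; assuming $f$ continuous from the outset is entirely adequate here and removes the need to track $\sum_{s\le t}\Delta f(s)\,\Delta Z(s)$.
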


We now state our main result.
\begin{thm}\label{Theorem 1.3}
{\rm Let $(A1)$-$(A4)$ hold and $4\beta+K<0$. Assume further that
there exist $\lambda>0$ and $\theta\in[1,2\mu]$ such that
\begin{equation}\label{eq2}
\limsup_{t\rightarrow\infty}\frac{1}{t^\theta}\int_0^t\delta^2(s)ds\leq
\lambda  \ \ \ \mbox{ a.s. }
\end{equation}
Then
\begin{equation}\label{eq3}
\lim\limits_{t\rightarrow\infty}\frac{1}{t^\mu}\int_0^t\Big\{X(s)+\frac{\delta(s)}{2\beta}\Big\}\d
s=0 \ \ \mbox{ a.s. }
\end{equation}
}
\end{thm}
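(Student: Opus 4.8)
The plan is to integrate \eqref{eq1} over $[0,t]$, isolate the running average, and then dispose of the boundary and martingale remainders. Writing $M^W(t):=\sigma\int_0^t X^{\gamma}(s-\tau)\sqrt{|X(s)|}\,\d W(s)$ and $M^N(t):=\int_0^t\int_U g(X(s-),u)\,\tilde N(\d s,\d u)$ for the two martingale parts, \eqref{eq1} gives $X(t)-\xi(0)=2\beta\int_0^t X(s)\,\d s+\int_0^t\delta(s)\,\d s+M^W(t)+M^N(t)$, whence
\begin{equation*}
\frac{1}{t^\mu}\int_0^t\Big\{X(s)+\frac{\delta(s)}{2\beta}\Big\}\d s=\frac{1}{2\beta t^\mu}\big(X(t)-\xi(0)-M^W(t)-M^N(t)\big).
\end{equation*}
Since $\mu\ge1$, the term $\xi(0)/t^\mu$ vanishes, so \eqref{eq3} reduces to proving $X(t)/t^\mu\to0$, $M^W(t)/t^\mu\to0$ and $M^N(t)/t^\mu\to0$ almost surely.

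First I would record the growth estimate feeding all three limits: taking $\rho=t$ in Lemma \ref{Lemma 2.2} and inserting \eqref{eq2} (after integrating by parts against the weight $\e^{-\kappa\beta s}$, using $\beta<0$ and $4\beta+K<0$) yields $\mathbb{E}X^2(t)\le C(1+t^\theta)$. For the two martingale terms I would then invoke Kronecker's lemma (Lemma \ref{Lemma 1.2}) with $f(t)=t^\mu$: it suffices that $\int_0^\infty s^{-\mu}\,\d M^W(s)$ and $\int_0^\infty s^{-\mu}\,\d M^N(s)$ converge a.s., and, being local martingales, they converge once $\int_0^\infty s^{-2\mu}\,\d\langle M\rangle(s)<\infty$ a.s. Here $(A3)$ with $g(0,\cdot)=0$ gives $\d\langle M^N\rangle(s)=\int_U g^2(X(s),u)\lambda(\d u)\,\d s\le KX^2(s)\,\d s$, while the Young inequality \eqref{eq14} together with $\gamma<\tfrac12$ gives $\d\langle M^W\rangle(s)\le C\big(1+X^2(s)+X^2(s-\tau)\big)\d s$; so everything reduces to the a.s. finiteness of $\int_0^\infty s^{-2\mu}X^2(s)\,\d s$ (the shift by $\tau$ being harmless). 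The remaining limit $X(t)/t^\mu\to0$ I would get from the same moment estimate, strengthened to higher powers by the $X^{2q}$-analogue of Lemma \ref{Lemma 2.2}, via Chebyshev and Borel--Cantelli along integers $t=n$, upgrading to all $t$ by bounding $\sup_{n\le t\le n+1}|X(t)-X(n)|$ with a maximal inequality.

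The hard part is the a.s. finiteness of $\int_0^\infty s^{-2\mu}X^2(s)\,\d s$ in the borderline regime where $\theta$ approaches $2\mu$. The crude route $\mathbb{E}\int_0^\infty s^{-2\mu}X^2(s)\,\d s=\int_0^\infty s^{-2\mu}\mathbb{E}X^2(s)\,\d s$ fed with $\mathbb{E}X^2(s)\le Cs^\theta$ gives only $\int_0^t\mathbb{E}X^2\le Ct^{\theta+1}\le Ct^{2\mu+1}$, so the weighted integral diverges once $\theta\ge2\mu-1$: the moment bound of Lemma \ref{Lemma 2.2} is too lossy here because it charges $\int\delta^2$ where only $(\int\delta)^2$ drives $X$. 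To close it I would not estimate $X$ through the moment method but through the variation-of-constants representation $X(t)=\e^{2\beta t}\xi(0)+\int_0^t \e^{2\beta(t-s)}\delta(s)\,\d s+\int_0^t \e^{2\beta(t-s)}\,\d(M^W+M^N)(s)$. The deterministic convolution is controlled by $(A2)$ alone: with $\Delta(t)=\int_0^t\delta(s)\,\d s$, an integration by parts turns $t^{-\mu}\int_0^t \e^{2\beta(t-s)}\delta(s)\,\d s$ into a vanishing boundary term plus $2|\beta|\int_0^t \e^{-2|\beta|u}\,\dfrac{\Delta(t)-\Delta(t-u)}{t^\mu}\,\d u$, which tends to $0$ by dominated convergence since $\Delta(t)/t^\mu\to\nu$ and $((t-u)/t)^\mu\to1$ for each fixed $u$ force the integrand to $0$ while $\e^{-2|\beta|u}$ dominates; this shows $X$ inherits the $O(t^{\mu-1})$ size of $-\delta/(2\beta)$ rather than the $\int\delta^2$-driven size. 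The stochastic convolution is then handled self-consistently, its Ornstein--Uhlenbeck mean-square obeying a fixed-point bound driven by $\mathbb{E}X^2$ itself; feeding back the $(A2)$-controlled deterministic part yields the sharp $\int_0^t\mathbb{E}X^2(s)\,\d s=O(t^{2\mu-1})$, which makes $\int_1^\infty s^{-2\mu}\mathbb{E}X^2(s)\,\d s<\infty$. Reconciling this sharp rate with the merely $\limsup$-type hypothesis \eqref{eq2} at the endpoint $\theta=2\mu$ is where the genuine work lies.
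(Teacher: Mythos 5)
Your skeleton (integrate the equation, substitute the variation-of-constants formula for $X(t)$, and kill the remainders with Kronecker's lemma) is the same as the paper's, but two steps on which everything hinges do not go through. First, the bound $\mathbb{E}X^2(t)\le C(1+t^\theta)$ cannot be obtained by ``taking $\rho=t$ in Lemma \ref{Lemma 2.2} and inserting \eqref{eq2}'': both $(A2)$ and \eqref{eq2} are purely almost-sure, pathwise statements about the \emph{random} process $\delta$, with no integrability in $\omega$ whatsoever, so $\mathbb{E}\int_0^t e^{-\kappa\beta s}\delta^2(s)\,\d s$ on the right-hand side of \eqref{eq7} may be infinite. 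This invalidates the moment estimate feeding all three of your limits, including the Chebyshev/Borel--Cantelli treatment of $X(t)/t^\mu$. Second, you correctly reduce the martingale terms to the a.s.\ finiteness of $\int_0^\infty s^{-2\mu}X^2(s)\,\d s$, correctly observe that the crude moment route diverges when $\theta\ge 2\mu-1$, and then leave the repair open: the proposed fixed-point argument via the Ornstein--Uhlenbeck representation again trades pathwise control of $\delta$ for moment control it does not have, and you concede that ``the genuine work lies'' in closing the endpoint $\theta=2\mu$. That is precisely the part a proof must supply.

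The paper's resolution avoids both problems at once and is worth internalizing: it never establishes $\int_0^\infty s^{-2\mu}X^2(s)\,\d s<\infty$ a.s., nor any unconditional bound on $\mathbb{E}X^2(t)$. Instead it localizes with the stopping times $\tau_n:=\inf\{t\ge 0:\int_0^t\delta^2(s)(1+s)^{-2\mu}\,\d s\ge n\}$; the integration-by-parts computation using \eqref{eq2} and $\theta\le 2\mu$ shows $\int_0^\infty\delta^2(s)(1+s)^{-2\mu}\,\d s<\infty$ a.s., hence $\{\tau_n=\infty\}\uparrow\Omega$, so it suffices to prove that each stopped integrand gives an $L^2$-bounded martingale. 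For that, Lemma \ref{Lemma 2.2} is applied at the bounded stopping time $s\wedge\tau_n$, and a Fubini interchange against the exponential kernel $e^{\kappa\beta(s-r)}$ collapses the double integral so that the only quantity one must control is $\mathbb{E}\int_0^{\tau_n}\delta^2(r)(1+r)^{-2\mu}\,\d r\le n$, which is finite \emph{by construction of $\tau_n$}, not by any moment hypothesis on $\delta$. The remaining term $X(t)/t^\mu$ is not handled by higher moments either: substituting the variation-of-constants formula produces a deterministic convolution (controlled pathwise by $(A2)$ alone, as in Deelstra--Delbaen) plus exponentially weighted stochastic convolutions, which are dispatched by the \emph{same} $L^2$-bounded martingales via Kronecker's lemma with $f(t)=e^{-2\beta t}(1+t)^\mu$. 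If you want to salvage your write-up, replace your global moment estimates by this localization-plus-Fubini device; as written, the argument has a genuine gap at its center.
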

\begin{proof}
It is easy to see from Eq. \eqref{eq1} that
\begin{equation}\label{eq4}
\begin{split}
\int_0^t\Big\{X(s)+\frac{\delta(s)}{2\beta}\Big\}\d
s&=\frac{X(t)-\xi(0)}{2\beta}-\frac{\sigma}{2\beta}\int_0^tX^{\gamma}(s-\tau)\sqrt{|X(s)|}\d
W(s)\\
&\quad-\frac{1}{2\beta}\int_0^t\int_Ug(X(s-),u)\tilde{N}(\d s,\d u).
\end{split}
\end{equation}
On the other hand, application of  It\^o's formula to $e^{-2\beta t}X(t)$
yields that
\begin{equation*}
\begin{split}
X(t)&=e^{2\beta t}\Big\{\xi(0)+\int_0^te^{-2\beta s}\delta(s)\d
s+\sigma\int_0^te^{-2\beta s}X^{\gamma}(s-\tau)\sqrt{|X(s)|}\d
W(s)\\
&\quad+\int_0^t\int_Ue^{-2\beta s}g(X(s-),u)\tilde{N}(\d s,\d
u)\Big\}.
\end{split}
\end{equation*}
Thus, substituting this into \eqref{eq4} one has
\begin{equation*}
\begin{split}
&\frac{1}{t^\mu}\int_0^t\Big\{X(s)+\frac{\delta(s)}{2\beta}\Big\}\d
s=\frac{(e^{2\beta
t}-1)\xi(0)}{2\beta t^\mu}\\
&\quad+\frac{1}{2\beta t^\mu}\int_0^te^{2\beta(t-
s)}\delta(s)\d s\\
&\quad+\frac{\sigma}{2\beta}\Big(1+\frac{1}{t}\Big)^\mu\frac{1}{e^{-2\beta t}(1+t)^\mu}\int_0^te^{-2\beta s}\delta(s)X^{\gamma}(s-\tau)\sqrt{|X(s)|}\d W(s)\\
&\quad-\frac{\sigma}{2\beta
}\Big(1+\frac{1}{t}\Big)^\mu\frac{1}{(1+t)^\mu}\int_0^tX^{\gamma}(s-\tau)\sqrt{|X(s)|}\d
W(s)\\
&\quad-\frac{1}{2\beta}\Big(1+\frac{1}{t}\Big)^\mu\frac{1}{(1+t)^\mu}\int_0^t\int_Ug(X(s-),u)\tilde{N}(\d s,\d u)\\
&\quad+\frac{1}{2\beta}\Big(1+\frac{1}{t}\Big)^\mu\frac{1}{e^{-2\beta
t}(1+t)^\mu}\int_0^t\int_Ue^{-2\beta s}g(X(s-),u)\tilde{N}(\d s,\d
u)\\
 &:=I_1(t)+I_2(t)+\frac{1}{2\beta}\Big(1+\frac{1}{t}\Big)^\mu
I_3(t)-\frac{\sigma}{2\beta }\Big(1+\frac{1}{t}\Big)^\mu
I_4(t)\\
&\quad-\frac{1}{2\beta}\Big(1+\frac{1}{t}\Big)^\mu
I_5(t)+\frac{1}{2\beta}\Big(1+\frac{1}{t}\Big)^\mu I_6(t).
\end{split}
\end{equation*}
To derive the desired assertion \eqref{eq3}, it is sufficient to
verify that $I_i(t)\rightarrow0$ a.s., $i=1,\cdots,6$, as
$t\rightarrow\infty$ respectively. Due to $\beta<0$ and $\mu\geq1$,
it is trivial that $I_1(t)\rightarrow0$ as $t\rightarrow\infty$.
Following a similar argument to that of \cite[p168]{dd95} and noting
that
$\lim_{t\rightarrow\infty}[(1+t)^\mu-(1+t-\sqrt{t})^\mu]/(1+t)^\mu=0$,
by $(A2)$ we can also deduce that $I_2(t)\rightarrow0$ a.s. for
$t\rightarrow\infty$. Next, in order to show $I_3(t)\rightarrow0$
a.s. and $I_4(t)\rightarrow0$ a.s. whenever $t\rightarrow\infty$,
respectively, by Lemma \ref{Lemma 1.2}  it suffices to check that
\begin{equation}\label{eq5}
\int_0^\infty \frac{X^{\gamma}(t-\tau)\sqrt{|X(t)|}}{(1+t)^\mu}\d
W(t) \ \ \mbox{ exists a.s. }
\end{equation}
For each $n>\|\xi\|$ define a stopping time
\begin{equation*}
\tau_n:=\inf\Big\{t\geq0\Big|\int_0^t\frac{\delta^2(s)}{(1+s)^{2\mu}}\d
s\geq n\Big\}.
\end{equation*}
In the light of  \eqref{eq2} there exists an $L>0$ such that
\begin{equation}
\int_0^t\delta^2(s)ds\leq L(1+t)^\theta \ \ \mbox{ a.s. }
\end{equation}
This, together with $\theta\in[1,2\mu]$, leads to
\begin{equation*}
\begin{split}
\int_0^\infty\frac{\delta^2(s)}{(1+s)^{2\mu}}\d
s&=\lim_{s\rightarrow\infty} \frac{\int_0^s\delta^2(u)\d
u}{(1+s)^{2\mu}}+2\mu\int_0^\infty\Big(\int_0^s\delta^2(u)\d
u\Big)\frac{\d
s}{(1+s)^{2\mu+1}}\\
&\leq\lim_{s\rightarrow\infty}
\frac{L}{(1+s)^{2\mu-\theta}}+2L\int_0^\infty\frac{1}{(1+s)^{2\mu+1-\theta}}\d
s\\
&<\infty \ \ \ \mbox{ a.s. }
\end{split}
\end{equation*}
Hence $\{\tau_n=\infty\}\uparrow\Omega$ and consequently, it is
sufficient to verify \eqref{eq5}  on $\{\tau_n=\infty\}$.
Furthermore, observing that
\begin{equation*}
J(t):=\int_0^t\frac{X^{\gamma}(s-\tau)\sqrt{|X(s)|}}{(1+s)^\mu}1_{\{s\leq\tau_n\}}\d
W(s)
\end{equation*}
is a local martingale, we only need to check that $J(t)$ is  an
$L^2$-bounded martingale. By the It\^o isometry and the Young
inequality \eqref{eq14} we can
obtain that
\begin{equation*}
\begin{split}
\mathbb{E}\left|J(t)\right|^2&=\int_0^{t}
\frac{\mathbb{E}\{X^{2\gamma}(s-\tau)X(s)\}1_{\{s\leq\tau_n\}}}{(1+s)^{2\mu}}\d
s\\
&\leq \int_0^{t}
\frac{\mathbb{E}\{X^2(s)1_{\{s\leq\tau_n\}}\}}{2(1+s)^{2\mu}}\d
s+\int_0^{t}
\frac{\mathbb{E}\{X^{4\gamma}(s-\tau)1_{\{s\leq\tau_n\}}\}}{2(1+s)^{2\mu}}\d
s\\
&\leq \int_0^{t} \frac{(1-2\gamma)}{2(1+s)^{2\mu}}\d s+\int_0^{t}
\frac{\mathbb{E}\{X^2(s)1_{\{s\leq\tau_n\}}\}}{2(1+s)^{2\mu}}\d
s+\gamma\int_0^{t}
\frac{\mathbb{E}\{X^2(s-\tau)1_{\{s\leq\tau_n\}}\}}{(1+s)^{2\mu}}\d
s\\
&\leq\frac{1-2\gamma}{2(2\mu-1)}+\int_0^{t} \frac{e^{-\kappa\beta
s}\mathbb{E}\{e^{\kappa\beta
(s\wedge\tau_n)}X^2(s\wedge\tau_n)\}}{2(1+s)^{2\mu}}\d s\\
&\quad+\gamma\int_0^{t}
\frac{e^{-\kappa\beta(s-\tau)}\mathbb{E}\{e^{\kappa\beta(s\wedge\tau_n-\tau)}X^2(s\wedge\tau_n-\tau)\}}{(1+s)^{2\mu}}\d
s\\
&:=(1-2\gamma)/(4\mu-2)+J_1(t)+J_2(t).
\end{split}
\end{equation*}
 By  \eqref{eq7} with $\kappa>0$ it follows that
\begin{equation}\label{eq6}
\begin{split}
J_1(t)&\leq C\int_0^{t} \frac{\Big\{1+s+e^{\kappa\beta
s}\mathbb{E}\int_0^{s\wedge\tau_n} e^{-\kappa\beta
r}\delta^2(r)dr\Big\}}{2(1+s)^{2\mu}}\d s\\
&\leq C\int_0^{t} \frac{\Big\{1+s+e^{\kappa\beta s}\int_0^se^{-\kappa\beta
r}\mathbb{E}(\delta^2(r)1_{\{r\leq\tau_n\}})\d r\Big\}}{2(1+s)^{2\mu}}\d s\\
&\leq C+C\int_0^{t}\frac{e^{\kappa\beta
s}}{2(1+s)^{2\mu}}\int_0^se^{-\kappa\beta
r}\mathbb{E}(\delta^2(r)1_{\{r\leq\tau_n\}})\d r\d s\\
&=C+C\int_0^{t}e^{\kappa\beta
r}\mathbb{E}(\delta^2(r)1_{\{r\leq\tau_n\}})\int_r^t\frac{e^{-\kappa\beta
s}}{2(1+s)^{2\mu}}\d s\d r\\
&\leq C+C\mathbb{E}\int_0^{\tau_n}\frac{\delta^2(r)}{(1+r)^{2\mu}}\d
r\\
&\leq C\Big(1+n\Big).
\end{split}
\end{equation}
Noting that
\begin{equation*}
\begin{split}
J_2(t)&\leq C+\gamma\int_\tau^t
\frac{e^{\kappa\beta(s-\tau)}(\|\xi\|^2+\mathbb{E}\{e^{-\kappa\beta(s\wedge\tau_n-\tau)}X^2(s\wedge\tau_n-\tau)1_{\{s\wedge\tau_n>\tau\}}\})}{2(1+s)^{2\mu}}\d
s, \ \ t>\tau,
\end{split}
\end{equation*}
and carrying out the similar argument to that of \eqref{eq6}, we can
 conclude that there exists $C(n,\mu,\alpha)>0$ such that
$J_2(t)\leq C(n,\mu,\alpha)$. Finally, $I_5(t)\rightarrow0$ a.s. and
$I_6(t)\rightarrow0$ a.s. follows whenever $t\rightarrow\infty$ by
observing
\begin{equation*}
\mathbb{E}\Big(\int_0^t\int_U\frac{g(X(s-),u)}{(1+s)^\mu}1_{\{s\leq\tau_n\}}\tilde{N}(\d
s,\d
u)\Big)^2=\mathbb{E}\int_0^t\int_U\frac{g^2(X(s-),u)}{(1+s)^{2\mu}}1_{\{s\leq\tau_n\}}\lambda(\d
u)\d s,
\end{equation*}
and following the previous argument, and the proof is therefore
complete.
\end{proof}

\begin{rem}
{\rm For $\delta(t)=t^{\mu-1},t\geq0$, and $\theta=2\mu-1$, it is
trivial to see that both $(A2)$ and \eqref{eq2} are true. }
\end{rem}

\begin{rem}
{\rm For $\beta<0,\sigma>0$ and $\gamma\in[0,\frac{1}{2})$, Theorem
\ref{Theorem 1.3} clearly applies to the generalized mean-reverting
model
\begin{equation*}
\begin{cases}
\d X(t)=\{2\beta X(t) +\delta(t)\}\d t+\sigma
|X(t)|^{\frac{1}{2}+\gamma}\d W(t),\\
X(0)=x>0,
\end{cases}
\end{equation*}
where Deelstra and Delbaen \cite{dd95} investigated the long-term
returns of such model with $\gamma=0$. Moreover,  Zhao \cite{z09}
discussed the long-time behavior of the stochastic interest rate
model \eqref{eq1} with $\gamma=0,g(x,u)=0$ for $x<0,u\in U$, and
\begin{equation}\label{eq28}
\int_Ug^2(x,u)\lambda(\d u)\leq K|x| \ \ \mbox{ for some constant }
K>0.
\end{equation}
Clearly, the linear case $g(x,u)=C|u|x$ for some $C>0$ does not
satisfy \eqref{eq28}, however, Theorem \ref{Theorem 1.3} is
available for such fundamental case. }
\end{rem}

\section{An Application to Two-Factor CIR Model}
In this section we turn to an application of Theorem \ref{Theorem
1.3}. Let  $W_1(t),W_2(t)$ be Brownian motions, and $N_1(\d t,\d
u),N_2(\d t,\d u)$ Poisson counting measures with characteristic
measures $\lambda_1(\cdot)$ and $\lambda_2(\cdot)$ respectively,
defined on $(\Omega,\mathcal {F},\mathbb{P},\{\mathcal
{F}_t\}_{t\geq0})$. Consider the following two-factor model with
jumps and  memory
\begin{equation}\label{eq13}
\begin{cases}
\d X(t)=\{2\beta_1 X(t) +\delta(t)\}\d t+\sigma_1
X^{\gamma_1}(t-\tau)\sqrt{|X(t)|}\d W_1(t)\\
\ \ \ \ \ \ \ \ \ \ \quad+\vartheta_1 X(t)\int_Uu\tilde{N}_1(\d t,\d u),\\
\d Y(t)=\{2\beta_2 Y(t) +X(t)\}\d t+\sigma_2
Y^{\gamma_2}(t-\tau)\sqrt{|Y(t)|}\d W_2(t)\\
\ \ \ \ \ \ \ \ \ \ \quad+\vartheta_2Y(t)\int_Uu\tilde{N}_2(\d t,\d
u)
\end{cases}
\end{equation}
with initial data $(X(t),Y(t))=(\xi(t),\eta(t)),t\in[-\tau,0]$,
where  $\xi,\eta\in\mathscr{C}$.

We assume that
\begin{enumerate}
\item[\textmd{(A5)}] $\beta_1<0,\sigma_1>0$ and $\gamma_1\in[0,\frac{1}{2}),\vartheta_1>0$, $\delta(t)$ satisfies
$(A2)$;
\item[\textmd{(A6)}]
$\beta_2<0,\sigma_2>0$, $\gamma_2\in[0,\frac{1}{2})$,
$\vartheta_2>0$ and $ \vartheta_2^2\int_Uu^2\lambda_2(\d
u)<-4\beta_2;$
\item[\textmd{(A7)}] For $\theta\in[1,2\mu]$ (where $\mu$ is defined in $(A2)$), $\int_0^\infty\frac{\delta^4(t)}{(1+t)^{2\theta}}\d
t<\infty$ a.s.
\end{enumerate}

\begin{lem}
{\rm Let $(A5)$ and $(A6)$ hold and assume that
\begin{equation}\label{eq30}
\vartheta_1^2\int_Uu^2(6+4\vartheta_1u+\vartheta_1^2u^2)\lambda_1(\d
u)=:\Gamma(\vartheta_1,\lambda_1)<-8\beta_1.
\end{equation}
Then \eqref{eq13} admits a unique nonnegative solution
$(X(t),Y(t))_{t\geq0}$, and  there exist $\kappa>0$ and $C>0$ such
that
\begin{equation}\label{eq29}
\begin{split}
\mathbb{E}(e^{-\kappa\beta_1 \rho}X^4(\rho)) &\leq
C+C\mathbb{E}\int_0^\rho e^{-\kappa\beta_1 s}(\delta^4(s)+1)\d s,
\end{split}
\end{equation}
where $\rho>0$ is a bounded stopping time.}
\end{lem}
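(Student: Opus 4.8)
The plan is to treat the two equations of \eqref{eq13} separately and reduce everything to the scalar theory already built. First I would settle existence, uniqueness and nonnegativity. The $X$-equation in \eqref{eq13} is exactly of the form \eqref{eq1} with $\beta=\beta_1$, $\sigma=\sigma_1$, $\gamma=\gamma_1$, the same delay $\tau$ and inhomogeneity $\delta$, and jump coefficient $g(x,u)=\vartheta_1xu$. I would verify (A1)--(A4) for these data: (A1) and (A2) are contained in (A5); (A3) holds with $K=\vartheta_1^2\int_Uu^2\lambda_1(\d u)$, finite by \eqref{eq30}; and (A4) holds since, for $x>0$, $\theta\in[0,1]$ and $u\ge0$, $x+\theta g(x,u)=x(1+\theta\vartheta_1u)\ge0$. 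Lemma \ref{lemma 1.1} then gives a unique nonnegative $X$. Given this nonnegative, adapted, c\`adl\`ag path, the $Y$-equation is again of the form \eqref{eq1} with $\beta=\beta_2$, $\sigma=\sigma_2$, $\gamma=\gamma_2$, jump coefficient $\vartheta_2yu$ ($K=\vartheta_2^2\int_Uu^2\lambda_2(\d u)<\infty$ by (A6)) and with the nonnegative term $X(t)$ playing the role of the inhomogeneity. Since the existence and nonnegativity part of Lemma \ref{lemma 1.1} uses only the local Lipschitz/linear-growth structure, the bound \eqref{eq27} and (A4) — and not the precise long-term behaviour of the inhomogeneity, while $\mathbb{E}\int_0^TX(s)\,\d s<\infty$ by \eqref{eq27} — that argument applies verbatim and produces a unique nonnegative $Y$. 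This gives the first assertion.

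For the moment estimate \eqref{eq29} I would imitate the proof of Lemma \ref{Lemma 2.2}, but at the fourth power with weight $e^{-\kappa\beta_1t}$. Applying the It\^o formula to $e^{-\kappa\beta_1t}X^4(t)$ and using $X\ge0$, the finite-variation part is
\begin{equation*}
e^{-\kappa\beta_1t}\Big\{\big((8-\kappa)\beta_1+\Gamma(\vartheta_1,\lambda_1)\big)X^4(t)+4\delta(t)X^3(t)+6\sigma_1^2X^3(t)X^{2\gamma_1}(t-\tau)\Big\}\,\d t,
\end{equation*}
the crucial feature being that the jump compensator contributes exactly
\begin{equation*}
X^4(t)\int_U\big\{(1+\vartheta_1u)^4-1-4\vartheta_1u\big\}\lambda_1(\d u)=X^4(t)\,\vartheta_1^2\int_Uu^2(6+4\vartheta_1u+\vartheta_1^2u^2)\lambda_1(\d u)=\Gamma(\vartheta_1,\lambda_1)X^4(t),
\end{equation*}
which is where the quantity $\Gamma(\vartheta_1,\lambda_1)$ of \eqref{eq30} originates.

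Next I would absorb the two cross terms by Young's inequality \eqref{eq14}. As $\gamma_1<\tfrac12$ one has $8\gamma_1<4$, so for every $\epsilon>0$ there are constants $C(\epsilon)$ with $4\delta X^3\le\epsilon X^4+C(\epsilon)\delta^4$ and $6\sigma_1^2X^3(t)X^{2\gamma_1}(t-\tau)\le\epsilon X^4(t)+\epsilon e^{\kappa\beta_1\tau}X^4(t-\tau)+C(\epsilon)$; the fixed positive factor $e^{\kappa\beta_1\tau}$ is attached to the delayed term precisely so that, after multiplying by $e^{-\kappa\beta_1s}$ and shifting $s\mapsto s-\tau$, the weights cancel and the delayed integral is dominated by $\epsilon\int_0^\rho e^{-\kappa\beta_1s}X^4(s)\,\d s$ plus an initial-data constant — exactly the device of Lemma \ref{Lemma 2.2}. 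Integrating over $[0,\rho]$ and taking expectations, the coefficient of $\mathbb{E}\int_0^\rho e^{-\kappa\beta_1s}X^4(s)\,\d s$ becomes $(8-\kappa)\beta_1+\Gamma(\vartheta_1,\lambda_1)+3\epsilon$. Since \eqref{eq30} reads $\Gamma(\vartheta_1,\lambda_1)+8\beta_1<0$ and $\beta_1<0$, I can first fix $\epsilon$ small and then choose $\kappa>0$ so that this coefficient vanishes, leaving precisely \eqref{eq29}.

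The main obstacle is the jump bookkeeping above together with the rigorous vanishing in expectation of the stochastic integrals. The Brownian integral is an $L^2$-martingale on the bounded interval $[0,\rho]$ by \eqref{eq27}. For the compensated Poisson integral $\int_0^t\int_U X^4(s-)\{(1+\vartheta_1u)^4-1\}\tilde N_1(\d s,\d u)$ I would invoke the $L^1$-criterion, using that $\int_U|(1+\vartheta_1u)^4-1|\lambda_1(\d u)<\infty$ by \eqref{eq30} and $\mathbb{E}\int_0^\rho e^{-\kappa\beta_1s}X^4(s)\,\d s<\infty$ by \eqref{eq27}; this is the clean route, as it avoids requiring the integrability of $u^8$ against $\lambda_1$. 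Alternatively, a localization by $\rho\wedge\inf\{t:X(t)>m\}$ followed by Fatou on the left and monotone convergence on the right yields \eqref{eq29} with a constant independent of $m$. Apart from this, the argument is a faithful fourth-moment analogue of Lemma \ref{Lemma 2.2}.
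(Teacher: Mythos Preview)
Your proposal is correct and follows essentially the same route as the paper: reduce existence and nonnegativity of $(X,Y)$ to two applications of Lemma \ref{lemma 1.1}, then apply the It\^o formula to $e^{-\kappa\beta_1t}X^4(t)$, identify the jump compensator as $\Gamma(\vartheta_1,\lambda_1)X^4(t)$, absorb the cross terms via Young's inequality exactly as in Lemma \ref{Lemma 2.2}, and finally tune $\epsilon$ and $\kappa$ so that the coefficient of $\int_0^\rho e^{-\kappa\beta_1s}X^4(s)\,\d s$ vanishes. Your write-up is in fact more explicit than the paper's in verifying $(A1)$--$(A4)$ for each factor and in justifying why the stochastic integrals have zero mean, but the underlying argument is the same.
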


\begin{proof}
By Lemma \ref{lemma 1.1}, under $(A5)$ and $(A6)$, \eqref{eq13}
admits a unique nonnegative solution $(X(t),Y(t))_{t\geq0}$. By the
It\^o formula and the Young inequality \eqref{eq14}, compute
\begin{equation}\label{eq31}
\begin{split}
\d(e^{-\kappa\beta_1 t}X^4(t))&=-\kappa\beta_1 e^{-\kappa\beta_1
t}X^4(t)\d t+e^{-\kappa\beta_1
t}\d X^4(t)\\
&=e^{-\kappa\beta_1 t}\Big\{(8-\kappa)\beta_1X^4(t)+4\delta(t)X^3(t)+6\sigma_1^2X^3(t)X^{2\gamma_1}(t-\tau)\\
&\quad+\int_U((1+\vartheta_1u)^4-1-4\vartheta_1u)\lambda_1(\d u)X^4(t)\Big\}+\tilde{M}_1(t)+\tilde{M}_2(t)\\
 &\leq e^{-\kappa\beta_1
t}\{((8-\kappa)\beta_1+\epsilon+\Gamma(\vartheta_1,\lambda_1))
X^4(t)+\epsilon e^{\kappa\beta_1\tau}X^4(t-\tau)\\
&\quad+C(\epsilon)(\delta^4(t)+1)\}\d
t+\tilde{M}_1(t)+\tilde{M}_2(t)
\end{split}
\end{equation}
for any $\kappa>0$ and sufficiently small $\epsilon>0$, where
$\tilde{M}_1(t)$ and $\tilde{M}_2(t)$ are two local martingales.
Then \eqref{eq29} can be obtained by integrating from $0$ to $\rho$,
taking expectations on both sides of \eqref{eq31} and, in
particular,  choosing $\kappa>0$ and $\epsilon>0$ such that
$(8-\kappa)\beta_1+2\epsilon+\Gamma(\vartheta_1,\lambda_1)=0$ due to
\eqref{eq30}.
\end{proof}
\begin{rem}
{\rm In fact, \eqref{eq13} admits a unique nonnegative solution
$(X(t),Y(t))_{t\geq0}$ under the weaker condition
$$
m(\vartheta_1,\lambda_1):=\vartheta_1^2\int_Uu^2\lambda_1(\d
u)<-4\beta_1, $$ rather than \eqref{eq30}, which is imposed just to
guarantee \eqref{eq29}. }
\end{rem}

For the two-factor model, we have the following result.

\begin{thm}\label{Theorem 3.1}
{\rm Under $(A5)-(A7)$ and \eqref{eq30},
\begin{equation*}
\lim\limits_{t\rightarrow\infty}\frac{1}{t^\mu}\int_0^tY(s)\d
s=\frac{\nu}{4\beta_1\beta_2}, \ \ \mbox{ a.s. }
\end{equation*}}
\end{thm}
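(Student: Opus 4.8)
The plan is to apply Theorem \ref{Theorem 1.3} twice, since both components of \eqref{eq13} are instances of the scalar model \eqref{eq1}: the $X$-equation has coefficients $\beta=\beta_1$, $(\sigma_1,\gamma_1)$ and jump coefficient $g(x,u)=\vartheta_1 xu$, while the $Y$-equation has $\beta=\beta_2$, $(\sigma_2,\gamma_2)$, $g(y,u)=\vartheta_2 yu$, with the forcing term $\delta(t)$ replaced by the already-solved process $X(t)$. I would first extract the long-term return of $X$, then feed it into the $Y$-equation as the new inhomogeneity, and finally chain the two limits.

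First I would verify the hypotheses of Theorem \ref{Theorem 1.3} for the $X$-equation. Conditions $(A1)$, $(A2)$ are exactly $(A5)$; for $g(x,u)=\vartheta_1 xu$ one checks $(A3)$ with $K_1=\vartheta_1^2\int_U u^2\lambda_1(\d u)$, and $(A4)$ follows from $x+\theta\vartheta_1 xu=x(1+\theta\vartheta_1u)\geq0$ when $x>0$ (recall $u\in\mathbb{R}_+$, $\vartheta_1>0$). The dissipativity requirement $4\beta_1+K_1<0$ follows from \eqref{eq30}, since $\Gamma(\vartheta_1,\lambda_1)\geq 6K_1$ forces $K_1<-\frac{4}{3}\beta_1<-4\beta_1$. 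The quadratic-growth condition \eqref{eq2} for $\delta$ follows from $(A7)$ by the Cauchy--Schwarz inequality. Theorem \ref{Theorem 1.3} then gives $t^{-\mu}\int_0^t\{X(s)+\delta(s)/(2\beta_1)\}\,\d s\to0$ a.s., which combined with $(A2)$ yields
\begin{equation*}
\lim_{t\to\infty}\frac{1}{t^\mu}\int_0^t X(s)\,\d s=-\frac{\nu}{2\beta_1}\quad\text{a.s.}
\end{equation*}

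Next I would apply Theorem \ref{Theorem 1.3} to the $Y$-equation, now viewing the nonnegative process $X(t)$ as the inhomogeneous forcing. Conditions $(A1)$, $(A3)$, $(A4)$ hold verbatim for $(\beta_2,\sigma_2,\gamma_2,\vartheta_2)$ with $K_2=\vartheta_2^2\int_U u^2\lambda_2(\d u)$, and the dissipativity $4\beta_2+K_2<0$ is precisely $(A6)$. The role of $(A2)$ is played by the limit just obtained, so the analogue of $\nu$ is $\nu'=-\nu/(2\beta_1)\geq0$. The one genuinely new ingredient is the analogue of \eqref{eq2}: I must produce $\theta\in[1,2\mu]$ and $\lambda>0$ with $\limsup_{t\to\infty}t^{-\theta}\int_0^t X^2(s)\,\d s\leq\lambda$ a.s. This is exactly where the fourth-moment bound \eqref{eq29} and hypothesis $(A7)$ enter: from \eqref{eq29} (taking $\rho=t$) one controls $\mathbb{E}X^4(s)$ in terms of $\int_0^s\delta^4$, and then $(A7)$ together with a Borel--Cantelli/Kronecker argument upgrades the resulting integrated moment estimate for $\int_0^t X^2$ to the required almost-sure growth bound.

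Granting this, Theorem \ref{Theorem 1.3} applied to $Y$ gives $t^{-\mu}\int_0^t\{Y(s)+X(s)/(2\beta_2)\}\,\d s\to0$ a.s., and dividing the earlier display by $-2\beta_2$ yields
\begin{equation*}
\lim_{t\to\infty}\frac{1}{t^\mu}\int_0^t Y(s)\,\d s=-\frac{1}{2\beta_2}\Big(-\frac{\nu}{2\beta_1}\Big)=\frac{\nu}{4\beta_1\beta_2}\quad\text{a.s.},
\end{equation*}
which is the assertion. The main obstacle is the middle step: unlike the genuine forcing $\delta$, whose quadratic growth is hypothesized directly, the growth of the new forcing $X$ must be extracted from the moment estimate \eqref{eq29}, and the passage from an expectation bound to an almost-sure $\limsup$ bound (the $X^2$-analogue of \eqref{eq2}) is the delicate point, requiring care in the choice of the exponent $\theta$ and in the Borel--Cantelli argument.
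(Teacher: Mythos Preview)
Your overall plan---apply Theorem~\ref{Theorem 1.3} first to the $X$-equation and then to the $Y$-equation, with $X$ playing the role of the forcing $\delta$ in the second application---is exactly what the paper does, and you have correctly isolated the only nontrivial step: one must verify the analogue of \eqref{eq2} for $X$, i.e.\ $\limsup_{t\to\infty}t^{-\theta}\int_0^tX^2(s)\,\d s\le C$ a.s.

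Where your sketch diverges from the paper, and where there is a real gap, is in how you propose to obtain this bound. You suggest taking $\rho=t$ in \eqref{eq29} to control $\mathbb{E}X^4(s)$ in terms of $\int_0^s\delta^4$, and then invoking a Borel--Cantelli argument. But $\delta$ is a \emph{random} process carrying no moment hypotheses whatsoever (only the pathwise condition $(A7)$), so $\mathbb{E}\int_0^s e^{-\kappa\beta_1r}\delta^4(r)\,\d r$ is in general uncontrolled, and the expectation bound you extract from \eqref{eq29} is vacuous. A Borel--Cantelli passage from moments to almost-sure growth would need quantitative tail estimates that simply are not available here.

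The paper's device is different and is the missing idea in your sketch: apply It\^o's formula to $X^2(t)$ \emph{pathwise}. After absorbing the cross terms with Young's inequality and using $4\beta_1+m(\vartheta_1,\lambda_1)<0$ (a consequence of \eqref{eq30}), one obtains an inequality of the form
\[
\frac{1}{t^\theta}\int_0^tX^2(s)\,\d s\le \frac{C(1+t)}{t^\theta}+\frac{C}{t^\theta}\int_0^t\delta^2(s)\,\d s+\frac{1}{t^\theta}\,(\text{two local martingales}),
\]
so that the almost-sure control of $\int_0^tX^2$ is reduced to showing that the martingale remainders, divided by $t^\theta$, vanish a.s. This is done exactly as in the proof of Theorem~\ref{Theorem 1.3}: by Kronecker's lemma (Lemma~\ref{Lemma 1.2}) it suffices that $\int_0^\infty(1+s)^{-\theta}X^{3/2}(s)X^{\gamma_1}(s-\tau)\,\d W_1(s)$ and the analogous jump integral exist a.s., and one checks $L^2$-boundedness after localising with the stopping times $\rho_n=\inf\{t:\int_0^t\delta^4(s)(1+s)^{-2\theta}\,\d s\ge n\}$. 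Only at this last, localised stage does \eqref{eq29} enter, and the stopping time caps the $\delta^4$-integral at $n$, so no moment assumption on $\delta$ is needed---$(A7)$ guarantees $\rho_n\uparrow\infty$ a.s.

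A minor side remark: your Cauchy--Schwarz derivation of \eqref{eq2} from $(A7)$ yields $\int_0^t\delta^2\le C(1+t)^{\theta+1/2}$, not $C(1+t)^\theta$; this is harmless for the $X$-equation provided $\theta+1/2\le 2\mu$, but you should note the shift in exponent.
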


\begin{proof}
By $(A5)$ and Theorem \ref{Theorem 1.3} we can deduce that
\begin{equation}\label{eq8}
\lim\limits_{t\rightarrow\infty}\frac{1}{t^\mu}\int_0^tX(s)\d
s=-\frac{\nu}{2\beta_1}, \ \ \mbox{ a.s. }
\end{equation}
On the other hand, for $\theta\in[1,2\mu]$ such that \eqref{eq2}, if
there exists $C>0$ such that
\begin{equation}\label{eq9}
\limsup_{t\rightarrow\infty}\frac{1}{t^{\theta}}\int_0^tX^2(s)\d
s\leq C, \ \ \mbox{ a.s., }
\end{equation}
which, together with \eqref{eq8} and Theorem \ref{Theorem 1.3},
leads to
\begin{equation*}
\lim\limits_{t\rightarrow\infty}\frac{1}{t^\mu}\int_0^tY(s)\d
s=\frac{\nu}{4\beta_1\beta_2}, \ \ \mbox{ a.s. }
\end{equation*}
Therefore, we only need  to verify 
\eqref{eq9}. By the It\^o formula and the Young inequality
\eqref{eq14}, it follows from \eqref{eq13} that
\begin{equation*}
\begin{split}
\d X^2(t) &=\Big\{(4\beta_1+m(\vartheta_1,\lambda_1))
X^2(t)+2\delta(t)X(t)+\sigma^2_1X(t)X^{2\gamma_1}(t-\tau)\Big\}\d
t\\
&\quad+2\sigma_1 X^{\frac{3}{2}}(t)X^{\gamma_1}(t-\tau)\d W_1(t)+\vartheta_1\int_U(2u+\vartheta_1u^2)X^2(t)\tilde{N}_1(\d u,\d t)\\
&\leq\Big\{(4\beta_1+\epsilon+m(\vartheta_1,\lambda_1))X^2(t)+\epsilon
X^2(t-\tau)+C(\epsilon)(1+\delta^2(t))\Big\}\d t\\
&\quad+2\sigma X^{\frac{3}{2}}(t)X^{\gamma_1}(t-\tau)\d
W_1(t)+\vartheta_1\int_U(2u+\vartheta_1u^2)X^2(t)\tilde{N}_1(\d u,\d
t)
\end{split}
\end{equation*}
for sufficiently small $\epsilon>0$ and some constant
$C(\epsilon)>0$. Integrating from $0$ to $t$ on both sides leads to
\begin{equation*}
\begin{split}
X^2(t)-\xi^2(0) &\leq \epsilon\|\xi\|^2\tau+(4\beta_1+2\epsilon
+m(\vartheta_1,\lambda_1))\int_0^tX^2(s)\d
s+C(\epsilon)\int_0^t(1+\delta^2(s)) \d
s\\
&\quad+2\sigma \int_0^tX^{\frac{3}{2}}(s)X^{\gamma_1}(s-\tau)\d
W_1(s)+\vartheta_1\int_0^t\int_U(2u+\vartheta_1u^2)X^2(s)\tilde{N}_1(\d
u,\d s).
\end{split}
\end{equation*}
By virtue of \eqref{eq30}, we can choose $\epsilon>0$ such that
$\tilde{\kappa}:=4\beta_1+2\epsilon +m(\vartheta_1,\lambda_1)<0$.
Thus  for $\theta\in[1,2\mu]$ such that $(A5)$
\begin{equation*}
\begin{split}
\frac{1}{t^{\theta}}\int_0^tX^2(s)\d s&\leq \frac{C(1+t)}{
t^{\theta}}+\frac{C}{\tilde{\kappa} t^{\theta}}\int_0^t\delta^2(s)\d
s\\
&\quad+\frac{2\sigma}{\tilde{\kappa} t^{\theta}}\int_0^t
X^{\frac{3}{2}}(s)X^{\gamma_1}(s-\tau)\d
W_1(s)+\frac{\vartheta_1}{\tilde{\kappa}
t^{\theta}}\int_0^t\int_U(2u+\vartheta_1u^2)X^2(s)\tilde{N}_1(\d
u,\d s).
\end{split}
\end{equation*}
By virtue of $\theta\in[1,2\mu]$ and \eqref{eq2}, note that the
first two terms on the right hand side are finite almost surely. 
In order to prove \eqref{eq9},  by Lemma
\ref{Lemma 1.2}  we only need to show that
\begin{equation*}
J_1(\infty):=\int_0^\infty\frac{X^{\frac{3}{2}}(s)X^{\gamma_1}(s-\tau)}{(1+s)^\theta}\d
W_1(s) \mbox{ and }
J_2(\infty):=\int_0^\infty\int_U\frac{X^2(s)}{(1+s)^\theta}\tilde{N}_1(\d
u,\d s)
\end{equation*}
exist a.s. For each $n>\|\xi\|$ define a stopping time
\begin{equation*}
\rho_n:=\inf\Big\{t\geq0\Big|\int_0^t\frac{\delta^4(s)}{(1+s)^{2\theta}}\d
s\geq n\Big\}.
\end{equation*}
By $(A5)$ it is easy to see that $\{\rho_n=\infty\}\uparrow\Omega$.
Following the argument of Theorem \ref{Theorem 1.3}, in what follows
we only need to show that
\begin{equation*}
M(t):=\int_0^t\frac{X^{\frac{3}{2}}(s)X^{\gamma_1}(s-\tau)}{(1+s)^\theta}1_{\{s\leq\rho_n\}}\d
W_1(s)
\end{equation*}
is $L_2$-bounded. By the It\^o isometry and the Young inequality
\eqref{eq14}, compute that
\begin{equation*}
\begin{split}
\mathbb{E}\left|M(t)\right|^2&=\mathbb{E}\int_0^t\frac{X^3(s)X^{2\gamma_1}(s-\tau)}{(1+s)^{2\theta}}1_{\{s\leq\rho_n\}}\d
s\\
&\leq
C+\mathbb{E}\int_0^t\frac{X^4(s)}{(1+s)^{2\theta}}1_{\{s\leq\rho_n\}}\d
s+\mathbb{E}\int_0^t\frac{X^4(s-\tau)}{(1+s)^{2\theta}}1_{\{s\leq\rho_n\}}\d
s\\
&:=C+J_1(t)+J_2(t).
\end{split}
\end{equation*}
For $\kappa>0$ by \eqref{eq29}, 
\begin{equation}
\begin{split}
J_1(t)&\leq\int_0^t\frac{e^{\kappa\beta_1 s}\mathbb{E}\{e^{-p\beta_1
(s\wedge\tau_n)}X^4(s\wedge\tau_n)\}}{(1+s)^{2\theta}}\d s\\
&\leq C\int_0^t\frac{e^{\kappa\beta_1
s}\Big\{1+s+\int_0^{s}e^{-\kappa\beta_1
r}\mathbb{E}(\delta^4(r)1_{\{r\leq\tau_n\}})\d
r\Big\}}{(1+s)^{2\theta}}\d s\\
&\leq C+C\int_0^t\frac{e^{\kappa\beta_1
s}}{(1+s)^{2\theta}}\int_0^{s}e^{-\kappa\beta_1
r}\mathbb{E}(\delta^4(r)1_{\{r\leq\tau_n\}})\d r\d s\\
&=C+C\int_0^te^{-\kappa\beta_1
r}\mathbb{E}(\delta^4(r)1_{\{r\leq\tau_n\}})\int_r^{t}\frac{e^{\kappa\beta_1
s}}{(1+s)^{2\theta}}\d s\d r\\
&\leq C(1+n).
\end{split}
\end{equation}
Similarly, we can get that $J_2(t)\leq C(1+n)$ and $J_2(\infty)$ exists. The proof is therefore
complete.
\end{proof}

\begin{rem}
{\rm By checking  the argument of Theorem \ref{Theorem 3.1}, it is
easy to see that Theorem \ref{Theorem 3.1} is still true for the
two-factor CIR-type mode \eqref{eq13}  with delay $\tau =0$ whenever
$\gamma_i\in[0,\frac{1}{2}),i=1,2$. On the other hand, the
 model \eqref{eq13} is not covered by
\cite[Theorem 2]{z09} due to the fact that the jump-diffusion
coefficient is Lipschitz continuous, but not H\"older continuous
with exponent $\frac{1}{2}$. }
\end{rem}

\begin{rem}
{\rm Stochastic models
under regime-switching have recently been developed to model various
financial quantities, e.g., option pricing, stock returns, and
portfolio optimisation. In particular, the CIR-type model under
regime-switching has found its considerable use as a model for
volatility and interest rate. Hence, it is also interesting to
discuss the long-term behavior of CIR-type model under
regime-switching
\begin{equation*}
\begin{cases}
\d X(t)=\{2\beta(r(t)) X(t) +\delta(t)\}\d t+\sigma(r(r))
|X(t)|^\theta\d W(t),\\
 X(0)=x \mbox{ and } r(0)=i_0,
\end{cases}
\end{equation*}
where $\theta\in[\frac{1}{2},1]$ and $r(t)$ is a continuous-time
Markov chain with a finite state space. This will  be presented
in a forthcoming paper. }
\end{rem}

\end{document}